\theoremstyle{plain}
\newtheorem{thm}[equation]{Theorem}
\newtheorem{cor}[equation]{Corollary}
\newtheorem{rem}[equation]{\it Remark}
\newtheorem{lem}[equation]{Lemma}
\newtheorem*{Acknowledgements}{\it Acknowledgements}
\keywords{Bakry-\'{E}mery Ricci curvature, Index form, Upper diameter bound, Ricci soliton, Hitchin-Thorpe inequality}
\subjclass[2010]{Primary 53C21, Secondary 53C20, 53C25}
\address{Department of Mathematics, Graduate School of Science, Osaka University, 1-1 Machikaneyama, Toyonaka, Osaka 560-0043, JAPAN}
\email{h-tadano@cr.math.sci.osaka-u.ac.jp}
\address{Faculty of Mathematics, University of Santiago de Compostela, 15782 Santiago de Compostela, SPAIN}
\email{homare.tadano@usc.es}
\title{Remark on a diameter bound for complete manifolds with positive Bakry-\'{E}mery Ricci curvature}
\dedicatory{Dedicated to Professor Kimio Miyajima on the occasion of his retirement}
\author{Homare TADANO}
\date{submitted July 22, 2015, revised October 27, 2015}
\thanks{This work was supported by Moriyasu Graduate Student Scholarship Foundation}
\begin{document}

\begin{abstract}
In this paper, we shall give a new upper diameter estimate for complete Riemannian manifolds in the case that the Bakry-\'{E}mery Ricci curvature has a positive lower bound and the norm of the potential function has an upper bound. Our diameter estimate improves previous ones obtained by Wei and Wylie (J. Differential Geom. 83, 377--405, 2009) and Limoncu (Math. Z. 271, 715--722, 2012). As an application, we shall give an upper diameter bound for compact Ricci solitons in terms of the maximum value of the scalar curvature. By using such a diameter bound, we shall provide some new sufficient conditions for four-dimensional compact Ricci solitons to satisfy the Hitchin-Thorpe inequality.
\end{abstract}

\maketitle

\numberwithin{equation}{section}

\section{Introduction}

Let $(M, g)$ be a complete Riemannian manifold and $f : M \rightarrow \mathbb{R}$ a smooth function. A \textit{Bakry-\'{E}mery Ricci curvature} \cite{B-E} is defined by $\operatorname{Ric}_{g} + \operatorname{Hess} f$, where $\operatorname{Ric}_{g}$ stands the Ricci curvature of $(M, g)$ and $\operatorname{Hess} f$ denotes the Hessian of $f$. Recently, the Bakry-\'{E}mery Ricci curvature has received much attention in various areas of mathematics, since it is a good substitute for the Ricci curvature allowing us to establish many interesting theorems in metric measure spaces, such as comparison theorems \cite{W-W}, eigenvalue estimates \cite{F-L-L}, Li-Yau Harnack inequalities \cite{Li}. In particular, Wei and Wylie \cite{W-W} proved the following Myers type theorem via Bakry-\'{E}mery Ricci curvature:

\begin{thm}[Wei-Wylie \cite{W-W}]
Let $(M, g)$ be an $n$-dimensional complete connected Riemannian manifold satisfying
\[
\operatorname{Ric}_{g} + \operatorname{Hess} f \geqslant (n - 1)Hg
\]
for some constant $H > 0$. If $| f | \leqslant k$ for some constant $k \geqslant 0$, then $M$ must be compact. Moreover, 
\begin{equation}\label{diam-1}
\operatorname{diam}(M, g) \leqslant \frac{\pi}{\sqrt{H}} + \frac{4k}{(n - 1) \sqrt{H}}.
\end{equation}
\end{thm}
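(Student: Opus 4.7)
The plan is to adapt the classical index form proof of Myers' theorem, absorbing the Hessian of $f$ via integration by parts and the bound $|f|\le k$. By the Hopf-Rinow theorem, completeness reduces the problem to producing a uniform upper bound of the stated form on the length $L$ of any unit-speed minimizing geodesic $\gamma:[0,L]\to M$; such a bound simultaneously forces $M$ to be compact and estimates $\operatorname{diam}(M,g)$.

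Fix such a $\gamma$, choose a parallel orthonormal frame $E_{1},\dots,E_{n-1}$ along $\gamma$ perpendicular to $\dot\gamma$, and let $\phi\in C^{\infty}([0,L])$ satisfy $\phi(0)=\phi(L)=0$. Applying the index form to the variation fields $V_{i}=\phi E_{i}$ and summing yields
\[
\int_{0}^{L}\phi^{2}\,\operatorname{Ric}(\dot\gamma,\dot\gamma)\,dt\le (n-1)\int_{0}^{L}(\phi')^{2}\,dt.
\]
Writing $h(t)=f(\gamma(t))$, the curvature hypothesis reads $\operatorname{Ric}(\dot\gamma,\dot\gamma)\ge(n-1)H-h''(t)$. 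Two integrations by parts, legitimate because both $\phi$ and $\phi\phi'$ vanish at the endpoints, move both derivatives from $h$ onto $\phi^{2}$; combined with $|h|\le k$ this gives
\[
(n-1)H\int_{0}^{L}\phi^{2}\,dt \le (n-1)\int_{0}^{L}(\phi')^{2}\,dt + k\int_{0}^{L}|(\phi^{2})''|\,dt.
\]

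Specializing to the Myers-type test function $\phi(t)=\sin(\pi t/L)$, the integrals evaluate to $\int_{0}^{L}\phi^{2}\,dt=L/2$, $\int_{0}^{L}(\phi')^{2}\,dt=\pi^{2}/(2L)$, and $(\phi^{2})''=2(\pi/L)^{2}\cos(2\pi t/L)$, so $\int_{0}^{L}|(\phi^{2})''|\,dt=4\pi/L$. Plugging in yields
\[
(n-1)HL^{2}\le (n-1)\pi^{2}+8\pi k,
\]
and the elementary estimate $\sqrt{a^{2}+2ab}\le a+b$ applied with $a=\pi$, $b=4k/(n-1)$ delivers $L\sqrt{H}\le \pi+4k/(n-1)$, which is the desired diameter bound.

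The main obstacle lies in the choice of test function: a naive polynomial or piecewise-linear $\phi$ degrades the constants, whereas $\sin(\pi t/L)$ is sharp for the pure Myers part and at the same time produces a clean $L^{1}$-norm for $(\phi^{2})''$. A smaller technical point is that the double integration by parts requires both $\phi$ and $\phi\phi'$ to vanish at the endpoints — this is precisely what lets us bypass any control on the derivative of $f$ along $\gamma$, which is not available under the hypotheses.
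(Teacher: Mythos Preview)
Your argument is correct. Note, however, that the paper does not itself prove this statement: it is quoted from Wei--Wylie \cite{W-W} as background. What the paper proves is the sharper Theorem~\ref{Main-Theorem}, and your proof in fact reproduces that argument verbatim (index form, integration by parts twice to move derivatives off $f\circ\gamma$, bound by $k\int_0^L |(\phi^2)''|\,dt$, and the choice $\phi=\sin(\pi t/L)$), arriving at exactly the inequality $(n-1)HL^{2}\le (n-1)\pi^{2}+8\pi k$ that yields the bound~(\ref{diam-result}). Your final step --- the elementary weakening $\sqrt{a^{2}+2ab}\le a+b$ --- is precisely the content of the paper's Remark that (\ref{diam-result}) implies (\ref{diam-1}). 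So rather than giving an independent proof of the Wei--Wylie bound, you have re-derived the paper's main theorem and then discarded its improvement. As a minor point, your use of Hopf--Rinow to justify the existence of minimizing geodesics from completeness alone is cleaner than the paper's phrasing, which invokes compactness before it has been established.
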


On the other hand, Limoncu \cite{L} gave the following diameter estimate for complete Riemannian manifolds under the same assumption as in the previous theorem:

\begin{thm}[Limoncu \cite{L}]\label{Thm-Limoncu}
Let $(M, g)$ be an $n$-dimensional complete connected Riemannian manifold satisfying
\[
\operatorname{Ric}_{g} + \operatorname{Hess} f \geqslant (n - 1)Hg
\]
for some constant $H > 0$. If $| f | \leqslant k$ for some constant $k \geqslant 0$, then $M$ must be compact. Moreover, 
\begin{equation}\label{diam-2}
\operatorname{diam}(M, g) \leqslant \frac{\pi}{\sqrt{H}} \sqrt{1 + \frac{2 \sqrt{2} k}{n - 1}}.
\end{equation}
In particular, if $k \geqslant \frac{(n - 1) \pi}{8}( \sqrt{2} \pi - 4)$, then the estimate {\rm (\ref{diam-2})} is sharper than {\rm (\ref{diam-1})}.
\end{thm}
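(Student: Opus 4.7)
The plan is to adapt the classical index-form proof of Myers' theorem, handling the potential $f$ by integration by parts so that no derivative of $f$ appears, and then controlling the remaining integral of $f$ against the test function by Cauchy--Schwarz. Concretely, suppose for contradiction that there is a minimizing unit-speed geodesic $\gamma : [0, L] \to M$ whose length $L$ exceeds the right-hand side of (\ref{diam-2}). Choose a parallel orthonormal frame $\{E_{1}, \ldots, E_{n-1}\}$ along $\gamma$ perpendicular to $\dot\gamma$, and for $\phi \in C^{2}([0,L])$ with $\phi(0) = \phi(L) = 0$ take the variations $V_{i} = \phi E_{i}$. Summing $I(V_{i}, V_{i}) \geq 0$ over $i$ and substituting $\operatorname{Ric}_{g}(\dot\gamma,\dot\gamma) \geq (n-1)H - (f \circ \gamma)''$ yields
\[
(n-1)\int_{0}^{L} (\phi')^{2} \, dt - (n-1)H \int_{0}^{L} \phi^{2} \, dt + \int_{0}^{L} (f\circ\gamma)''\, \phi^{2} \, dt \geq 0.
\]

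Since only $|f|$, and not derivatives of $f$, is controlled, I would integrate by parts twice in the last term, using $\phi(0) = \phi(L) = 0$ to kill all boundary contributions, and rewrite it as $2 \int_{0}^{L} \bigl((\phi')^{2} + \phi\phi''\bigr)(f \circ \gamma) \, dt$. The key estimate is then Cauchy--Schwarz combined with $|f| \leq k$:
\[
\left| \int_{0}^{L} \bigl((\phi')^{2} + \phi\phi''\bigr)(f\circ\gamma) \, dt \right| \leq k\sqrt{L} \left( \int_{0}^{L} \bigl((\phi')^{2} + \phi\phi''\bigr)^{2} \, dt \right)^{1/2}.
\]

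I would then specialize to $\phi(t) = \sin(\pi t/L)$, for which $(\phi')^{2} + \phi\phi'' = (\pi/L)^{2} \cos(2\pi t / L)$; all integrals involved, namely $\int \phi^{2}$, $\int (\phi')^{2}$, and $\int \cos^{2}(2\pi t/L)$, reduce to elementary trigonometric ones over a half-period. Substituting these values into the inequality from the first paragraph and rearranging should produce $H L^{2} \leq \pi^{2} \bigl( 1 + \tfrac{2\sqrt{2}\,k}{n-1} \bigr)$, contradicting the assumption on $L$ and hence establishing both compactness and (\ref{diam-2}).

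The main point of delicacy is choosing the right estimate for the $f$-integral: a coarser bound, say pulling $|f| \leq k$ directly outside $\int |(\phi')^{2} + \phi\phi''|\,dt$, would change the resulting constant, so identifying Cauchy--Schwarz (which matches the $L^{2}$-control of $f$ over an interval of length $L$) as the correct tool is the main conceptual step. Once that is done, the rest of the argument is routine trigonometric computation, and the concluding sharpness comparison with (\ref{diam-1}) reduces to the elementary algebraic observation that the two right-hand sides coincide precisely when $k = \frac{(n-1)\pi}{8}(\sqrt{2}\pi - 4)$.
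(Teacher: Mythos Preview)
Your proposal is correct and follows exactly Limoncu's approach as the paper describes it in the Remark after the proof of Theorem~\ref{Main-Theorem}: the index-form inequality with $\phi(t)=\sin(\pi t/L)$, two integrations by parts to move the derivatives off $f$, and then Cauchy--Schwarz on $\int_{0}^{L} f\,\tfrac{d}{dt}(\phi\phi')\,dt$. One correction to your final paragraph: the alternative you call ``coarser''---pulling $|f|\leq k$ directly outside $\int_{0}^{L}|(\phi')^{2}+\phi\phi''|\,dt$---is in fact \emph{sharper}, and is precisely the refinement the paper uses to replace Limoncu's constant $2\sqrt{2}$ by $8/\pi$.
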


The aim of this paper is to improve these two diameter estimates under the same assumptions as in the two previous theorems by giving the following:

\begin{thm}\label{Main-Theorem}
Let $(M, g)$ be an $n$-dimensional complete connected Riemannian manifold satisfying
\begin{equation}\label{assumption}
\operatorname{Ric}_{g} + \operatorname{Hess} f \geqslant (n - 1)Hg
\end{equation}
for some constant $H > 0$. If $| f | \leqslant k$ for some constant $k \geqslant 0$, then $M$ must be compact. Moreover, 
\begin{equation}\label{diam-result}
\operatorname{diam}(M, g) \leqslant \frac{\pi}{\sqrt{H}} \sqrt{1 + \frac{8k}{(n - 1) \pi}}.
\end{equation}
\end{thm}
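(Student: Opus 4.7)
The plan is to adapt the classical Myers-type argument via the index form, but to handle the Hessian term by integrating by parts \emph{twice} rather than once (as in Wei--Wylie) or once with a Cauchy--Schwarz step (as in Limoncu). Assuming for the moment that $M$ is compact, let $p, q \in M$ realize the diameter $L = \operatorname{diam}(M,g)$, let $\gamma:[0,L]\to M$ be a unit-speed minimizing geodesic from $p$ to $q$, and let $\{E_1,\dots,E_{n-1}\}$ be a parallel orthonormal frame along $\gamma$ perpendicular to $\dot\gamma$. For a smooth function $\phi$ with $\phi(0)=\phi(L)=0$, apply the second variation inequality to the variation fields $V_i(t)=\phi(t)E_i(t)$ and sum over $i$ to obtain
\[
0 \leq (n-1)\int_0^L (\phi')^2 \, dt - \int_0^L \phi^2 \operatorname{Ric}_g(\dot\gamma,\dot\gamma)\, dt.
\]

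Next, I would use the curvature hypothesis \eqref{assumption} in the form $\operatorname{Ric}_g(\dot\gamma,\dot\gamma) \geq (n-1)H - (f\circ\gamma)''$ to replace the Ricci term. The essential trick is then to integrate the resulting $\int_0^L \phi^2 (f\circ\gamma)''\, dt$ by parts \emph{twice}, transferring both derivatives onto $\phi^2$; since $\phi$ and $(\phi^2)'$ both vanish at $t=0,L$, all boundary terms drop out, and the integral becomes $\int_0^L (\phi^2)''(f\circ\gamma)\, dt$. This allows $|f|\leq k$ to be used pointwise without ever invoking Cauchy--Schwarz, giving
\[
0 \leq (n-1)\int_0^L (\phi')^2\, dt - (n-1)H\int_0^L \phi^2\, dt + k\int_0^L |(\phi^2)''|\, dt.
\]

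For the test function I would choose the classical $\phi(t)=\sin(\pi t/L)$, which makes $\phi^2 = (1-\cos(2\pi t/L))/2$, so that $(\phi^2)'' = (2\pi^2/L^2)\cos(2\pi t/L)$. The three integrals then evaluate in closed form: $\int_0^L(\phi')^2 dt = \pi^2/(2L)$, $\int_0^L \phi^2 dt = L/2$, and the key new computation $\int_0^L|(\phi^2)''|\,dt = 4\pi/L$. Substituting and multiplying through by $2L/(n-1)$ collapses the inequality to
\[
HL^2 \;\leq\; \pi^2 + \frac{8\pi k}{n-1} \;=\; \pi^2\!\left(1+\frac{8k}{(n-1)\pi}\right),
\]
which is precisely \eqref{diam-result}. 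Compactness follows because this bound applies to \emph{any} pair of points joined by a minimizing geodesic (which exists by completeness via Hopf--Rinow), making $M$ bounded and hence compact.

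The main obstacle I expect is not the calculus but the decision of how to handle the $(f\circ\gamma)''$ term: Wei--Wylie integrate by parts once and then estimate using the derivative of $\phi^2$, while Limoncu uses Cauchy--Schwarz. The improvement here rests on the observation that a \emph{second} integration by parts removes all derivatives from $f$, after which $|f|\leq k$ can be used in its strongest pointwise form and the gain reduces to computing $\int_0^L|(\phi^2)''|\,dt$ exactly. Beyond verifying those boundary terms vanish and confirming the elementary integral of $|\cos(2\pi t/L)|$ over a full period, the rest is routine.
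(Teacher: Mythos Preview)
Your proposal is correct and follows essentially the same approach as the paper's proof: both use the index form along a minimizing geodesic with the test function $\phi(t)=\sin(\pi t/\ell)$, and both transfer the two derivatives from $f$ onto $\phi^2$ via a double integration by parts (the paper writes this as the identity $\int_0^\ell \phi^2 (f\circ\sigma)''\,dt = 2\int_0^\ell f\,(\phi\dot\phi)'\,dt$, which is the same as your $\int_0^\ell (\phi^2)''(f\circ\gamma)\,dt$), then apply $|f|\le k$ pointwise and compute $\int_0^\ell |\cos(2\pi t/\ell)|\,dt$ exactly. If anything, your handling of compactness via Hopf--Rinow is slightly cleaner than the paper's phrasing, which somewhat circularly invokes compactness to obtain the minimizing geodesic.
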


\begin{rem}\rm
Since
\[
\frac{8}{\pi} \approx 2.54647 \quad \mbox{and} \quad 2 \sqrt{2} \approx 2.82842, 
\]
our diameter estimate (\ref{diam-result}) is sharper than (\ref{diam-2}). Moreover, we may easily see that our estimate (\ref{diam-result}) is also sharper than (\ref{diam-1}) \textit{without any assumptions on $k$}.
\end{rem}

Our Theorem \ref{Main-Theorem} has applications to an upper diameter bound and the Hitchin-Thorpe inequality for compact Ricci solitons. A complete Riemannian manifold $(M, g)$ is called a \textit{Ricci soliton} if there exists a vector field $X \in \mathfrak{X}(M)$ satisfying the equation
\begin{equation}\label{RS}
\operatorname{Ric}_{g} + \frac{1}{2} \mathcal{L}_{X} g = \lambda g
\end{equation}
for some constant $\lambda \in \mathbb{R}$, where $\mathcal{L}_{X}$ denotes the Lie derivative by $X$. We say that the soliton $(M, g)$ is \textit{shrinking}, \textit{steady} and \textit{expanding} described as $\lambda > 0, \lambda = 0$ and $\lambda < 0$, respectively. Note that if $X$ is a Killing vector field, then the soliton is an Einstein manifold. In such a case, we say that the soliton is \textit{trivial}. When $X$ may be replaced with a gradient vector field $\nabla f$ for some smooth function $f : M \rightarrow \mathbb{R}$, called a \textit{potential function}, we call $(M, g)$ a \textit{gradient Ricci soliton}. Then (\ref{RS}) becomes
\begin{equation}\label{GRS}
\operatorname{Ric}_{g} + \operatorname{Hess} f = \lambda g.
\end{equation}
Thanks to Perelman \cite{P}, any compact Ricci soliton is gradient. It is known \cite{Cao} that any non-trivial compact Ricci soliton $(M, g)$ is shrinking with $\dim M \geqslant 4$. Moreover, it is also known \cite{Cao} that the potential function $f$ of any gradient Ricci soliton $(M, g)$ satisfies $R + | \nabla f |^{2} - 2 \lambda f = C$ for some real constant $C$, where $R$ denotes the scalar curvature on the soliton. By adding some constant on $f$, we may normalize $f$ such that
\begin{equation}\label{normalize}
R + | \nabla f |^{2} = 2 \lambda f.
\end{equation}
Fern\'{a}ndez-L\'{o}pez and Garc\'{i}a-R\'{i}o \cite{FL-GR} investigated a lower diameter bound for compact shrinking Ricci solitons depending on the scalar and Ricci curvatures. By using Theorem \ref{Main-Theorem}, we shall then give an upper diameter bound for compact shrinking Ricci solitons in terms of the maximum value of the scalar curvature.

\begin{cor}\label{Cor-1}
Let $(M, g)$ be an $n$-dimensional compact connected shrinking Ricci soliton satisfying {\rm (\ref{GRS})}. Suppose that the soliton is normalized in sense of {\rm (\ref{normalize})}. Then
\begin{equation}\label{diam-Cor-1}
\operatorname{diam}(M, g) \leqslant \frac{\pi}{\sqrt{\lambda}} \sqrt{n - 1 + \frac{4 R_{\mathrm{max}}}{\pi \lambda}}, 
\end{equation}
where $R_{\mathrm{max}}$ denotes the maximum value of the scalar curvature $R$ on the soliton.
\end{cor}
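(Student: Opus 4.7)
The plan is to recast the soliton equation (\ref{GRS}) into the form of the hypothesis (\ref{assumption}) of Theorem~\ref{Main-Theorem} and then invoke that theorem. Since (\ref{GRS}) reads $\operatorname{Ric}_g + \operatorname{Hess} f = \lambda g$, this coincides with (\ref{assumption}) upon setting $(n-1)H = \lambda$, i.e.\ $H = \lambda/(n-1)$. What remains is to produce an explicit constant $k$ satisfying $|f|\leq k$, expressed in terms of $R_{\max}$, and then to substitute into (\ref{diam-result}).

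To bound $|f|$ I would evaluate the normalization (\ref{normalize}) at critical points of $f$. Since $M$ is compact, $f$ attains its maximum at some $p_{\max}$ and its minimum at some $p_{\min}$, and at each point $|\nabla f|=0$, so (\ref{normalize}) collapses to $R = 2\lambda f$. Hence $f_{\max} = R(p_{\max})/(2\lambda) \leq R_{\max}/(2\lambda)$ and $f_{\min} = R(p_{\min})/(2\lambda)$. To upgrade these into $|f|\leq R_{\max}/(2\lambda)$ I need the pointwise lower bound $R\geq 0$, which is standard on a compact shrinking gradient Ricci soliton. It follows from the identity
\[
\Delta_f R = 2\lambda R - 2|\operatorname{Ric}|^2,
\]
where $\Delta_f = \Delta - \langle \nabla f,\nabla\cdot\rangle$, derived by combining the traced soliton equation $R+\Delta f = n\lambda$, the contracted Bianchi consequence $\nabla R = 2\operatorname{Ric}(\nabla f,\cdot)$, and Bochner's formula applied to $f$. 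At a minimum point of $R$ the weak maximum principle gives $\Delta_f R \geq 0$, so $\lambda R_{\min} \geq |\operatorname{Ric}|^2(p_{\min}) \geq 0$, whence $R_{\min}\geq 0$.

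With $H=\lambda/(n-1)$ and $k:= R_{\max}/(2\lambda)$ in hand, plugging into (\ref{diam-result}) yields
\[
\operatorname{diam}(M,g) \;\leq\; \pi\sqrt{\tfrac{n-1}{\lambda}}\,\sqrt{1+\tfrac{8\cdot R_{\max}/(2\lambda)}{(n-1)\pi}} \;=\; \frac{\pi}{\sqrt{\lambda}}\sqrt{n-1+\frac{4R_{\max}}{\pi\lambda}},
\]
which is precisely (\ref{diam-Cor-1}). The only nonroutine step is the nonnegativity $R\geq 0$; everything else is algebraic simplification after Theorem~\ref{Main-Theorem} has been applied. The mild obstacle is thus invoking (or briefly reproducing) the standard maximum-principle argument above, after which the corollary follows at once.
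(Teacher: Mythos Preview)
Your proof is correct and follows essentially the same route as the paper: the paper packages the bound $0\leqslant 2\lambda f\leqslant R_{\max}$ into a separate lemma (citing Chen for $R\geqslant 0$ rather than sketching the maximum-principle argument you give, and observing along the way that $R$ actually attains its maximum at the maximum of $f$), and then applies Theorem~\ref{Main-Theorem} with $H=\lambda/(n-1)$ and $k=R_{\max}/(2\lambda)$ exactly as you do.
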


Since Ricci solitons are natural generalization of Einstein manifolds, we may expect some topological obstructions to the existence of compact Ricci solitons. The Hitchin-Thorpe inequality for compact shrinking Ricci solitons was proved by Ma \cite{Ma} assuming some upper bounds on the $L^{2}$-norm of the scalar curvature, while Fern\'{a}ndez-L\'{o}pez and Garc\'{i}a-R\'{i}o \cite{FL-GR} investigated the Hitchin-Thorpe inequality assuming some upper diameter bounds in terms of the Ricci curvature. By using Corollary \ref{Cor-1}, we then provide the following new sufficient condition for four-dimensional compact shrinking Ricci solitons to satisfy the Hitchin-Thorpe inequality:

\begin{cor}\label{Cor-2}
Let $(M, g)$ be a four-dimensional compact connected shrinking Ricci soliton satisfying {\rm (\ref{GRS})}. Suppose that the soliton is normalized in sense of {\rm (\ref{normalize})}. If
\begin{equation}\label{diam-Cor-2}
\sqrt{\frac{R_{\mathrm{max}}}{\lambda^{2}} \left( 4 \pi + \frac{\pi^{2}}{2} \right)} \leqslant \operatorname{diam}(M, g), 
\end{equation}
then the soliton satisfies the Hitchin-Thorpe inequality $2 \chi(M) \geqslant 3 | \tau(M) |$.
\end{cor}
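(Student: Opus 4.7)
The plan is to combine Corollary~\ref{Cor-1} with the Chern--Gauss--Bonnet and signature theorems on a closed oriented four-manifold. The key observation is that the hypothesis (\ref{diam-Cor-2}), once squared and juxtaposed with the upper diameter bound of Corollary~\ref{Cor-1}, should force a pointwise bound on the scalar curvature of the form $R_{\max}\leqslant 6\lambda$. Indeed, for $n=4$ Corollary~\ref{Cor-1} reads $\operatorname{diam}(M,g)^{2}\leqslant \frac{3\pi^{2}}{\lambda}+\frac{4\pi R_{\max}}{\lambda^{2}}$, while (\ref{diam-Cor-2}) reads $\frac{R_{\max}}{\lambda^{2}}\bigl(4\pi+\frac{\pi^{2}}{2}\bigr)\leqslant \operatorname{diam}(M,g)^{2}$; subtracting the common term $\frac{4\pi R_{\max}}{\lambda^{2}}$ from both sides leaves $\frac{\pi^{2}R_{\max}}{2\lambda^{2}}\leqslant \frac{3\pi^{2}}{\lambda}$, i.e.\ $R_{\max}\leqslant 6\lambda$. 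This is the one place where the main theorem of the paper really enters.

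Next I would translate $R_{\max}\leqslant 6\lambda$ into the integral condition that controls Hitchin--Thorpe. Recall that on a closed oriented Riemannian four-manifold
\[
2\chi(M)-3|\tau(M)|\;\geqslant\;\frac{1}{4\pi^{2}}\int_{M}\left(\frac{R^{2}}{24}-\frac{|\operatorname{Ric}_{0}|^{2}}{2}\right)dV_{g},
\]
since the self-dual/anti-self-dual Weyl contributions are non-negative. Using $|\operatorname{Ric}_{0}|^{2}=|\operatorname{Ric}|^{2}-\tfrac{R^{2}}{4}$ in dimension four, the condition for the right-hand side to be non-negative reduces to $\int_{M}R^{2}\,dV_{g}\leqslant 24\lambda^{2}\operatorname{Vol}(M,g)$.

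To handle $\int_{M}|\operatorname{Ric}|^{2}$ in terms of $\int_{M}R^{2}$, I would invoke two standard identities for a compact gradient shrinking Ricci soliton normalized by (\ref{normalize}). First, tracing (\ref{GRS}) and integrating gives $\int_{M}R\,dV_{g}=n\lambda\operatorname{Vol}(M,g)$. Second, Hamilton's identity $\Delta R-\langle\nabla R,\nabla f\rangle=2\lambda R-2|\operatorname{Ric}|^{2}$, after integration by parts against $\Delta f=n\lambda-R$, yields
\[
2\int_{M}|\operatorname{Ric}|^{2}\,dV_{g}=\int_{M}R^{2}\,dV_{g}-(n-2)\lambda\int_{M}R\,dV_{g},
\]
which for $n=4$ becomes $\int_{M}|\operatorname{Ric}|^{2}\,dV_{g}=\frac{1}{2}\int_{M}R^{2}\,dV_{g}-4\lambda^{2}\operatorname{Vol}(M,g)$. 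Substituting back, the integrand $\frac{R^{2}}{24}-\frac{|\operatorname{Ric}_{0}|^{2}}{2}$ integrates to $\frac{1}{12}\bigl(24\lambda^{2}\operatorname{Vol}(M,g)-\int_{M}R^{2}\,dV_{g}\bigr)$, so Hitchin--Thorpe is implied by $\int_{M}R^{2}\,dV_{g}\leqslant 24\lambda^{2}\operatorname{Vol}(M,g)$.

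Finally, the pointwise bound $R_{\max}\leqslant 6\lambda$ obtained in the first step closes the argument: since $R\geqslant 0$ on a compact shrinking soliton (Ivey--Hamilton), one has $\int_{M}R^{2}\,dV_{g}\leqslant R_{\max}\int_{M}R\,dV_{g}=4\lambda R_{\max}\operatorname{Vol}(M,g)\leqslant 24\lambda^{2}\operatorname{Vol}(M,g)$, completing the proof. The hardest step is conceptual rather than technical: recognizing that the exotic-looking constant $4\pi+\pi^{2}/2$ in (\ref{diam-Cor-2}) is precisely engineered, after subtraction against the bound in Corollary~\ref{Cor-1}, to yield the clean scalar curvature inequality $R_{\max}\leqslant 6\lambda$ that is needed to run the Chern--Gauss--Bonnet/signature argument; the remainder is algebraic bookkeeping with well-known soliton identities.
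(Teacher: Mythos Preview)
Your argument is correct and follows the same route as the paper: combine the hypothesis with the $n=4$ case of Corollary~\ref{Cor-1} to obtain $R_{\max}\leqslant 6\lambda$, then use $R\geqslant 0$ and $\int_{M}R=4\lambda\,\mathrm{vol}(M,g)$ to get $\int_{M}R^{2}\leqslant 24\lambda^{2}\,\mathrm{vol}(M,g)$, and conclude Hitchin--Thorpe. The only difference is that the paper invokes Ma's result (Theorem~\ref{Thm-Ma}) for the final implication, whereas you unpack its proof via Chern--Gauss--Bonnet, the signature formula, and the soliton identity for $\int_{M}|\operatorname{Ric}|^{2}$; note also that the non-negativity of $R$ on a complete shrinking soliton in dimension four is due to Chen \cite{Chen} rather than Ivey--Hamilton.
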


This paper is organized as follows: In Section 2, after introducing our notation, we shall give a proof of Theorem \ref{Main-Theorem}. Ending with Section 3, proofs of Corollary \ref{Cor-1} and \ref{Cor-2} shall be given.

\begin{Acknowledgements}\rm
The author would like to thank Professor Toshiki Mabuchi for his encouragements. This work was carried out while the author was visiting University of Santiago de Compostela. The author would also like to thank Professor Eduardo Garc\'{i}a-R\'{i}o for his warm hospitality.
\end{Acknowledgements}

\section{A proof of Theorem \ref{Main-Theorem}}

Before giving a proof of Theorem \ref{Main-Theorem}, we shall introduce our notation. Let $X, Y, Z \in \mathfrak{X}(M)$ be three vector fields on $M$. For any smooth function $f \in \mathcal{C}^{\infty}(M)$, the gradient vector field and Hessian of $f$ are defined by
\[
g( \nabla f, X) = df(X) \quad \mbox{and} \quad \operatorname{Hess}f (X, Y) = g(\nabla_{X} \nabla f, Y), 
\]
respectively. The curvature tensor and Ricci tensor are defined by
\[
R(X, Y)Z = \nabla_{X} \nabla_{Y} Z - \nabla_{Y} \nabla_{X} Z - \nabla_{[X, Y]} Z \quad \mbox{and} \quad \operatorname{Ric}_{g}(X, Y) = \sum_{i = 1}^{n} g(R(e_{i}, X)Y, e_{i}), 
\]
respectively. Here, $\{ e_{i} \}_{i = 1}^{n}$ is an orthonormal frame of $(M, g)$. In order to prove Theorem \ref{Main-Theorem}, we shall use the index form of a minimizing unit speed geodesic segment. We refer the reader to the books \cite{Lee, Petersen} for basic facts about this topic.

\begin{proof}[Proof of Theorem {\rm \ref{Main-Theorem}}]
Our proof of Theorem \ref{Main-Theorem} is similar to that of Theorem \ref{Thm-Limoncu} by Limoncu \cite{L}. Take arbitrary two points $p, q \in M$. By the compactness of the manifold $(M, g)$, there exists a minimizing unit speed geodesic segment $\sigma$ from $p$ to $q$ of length $\ell$. Let $\{ e_{1} = \dot{\sigma}, e_{2}, \cdots, e_{n} \}$ be a parallel orthonormal frame along $\sigma$. Recall that, for any smooth function $\phi \in \mathcal{C}^{\infty}([0, \ell])$ satisfying $\phi(0) = \phi(\ell) = 0$, we obtain
\begin{equation}\label{index}
I(\phi e_{i}, \phi e_{i}) = \int_{0}^{\ell} \left( g(\dot{\phi} e_{i}, \dot{\phi} e_{i}) - g(R(\phi e_{i}, \dot{\sigma}) \dot{\sigma}, \phi e_{i}) \right) dt, 
\end{equation}
where $I(\cdot, \cdot)$ denotes the index form of $\sigma$. From (\ref{index}), we have
\begin{equation}\label{index-sum}
\sum_{i = 2}^{n} I(\phi e_{i}, \phi e_{i}) = \int_{0}^{\ell} \left( (n - 1) \dot{\phi}^{2} - \phi^{2} \operatorname{Ric}_{g}(\dot{\sigma}, \dot{\sigma}) \right) dt, 
\end{equation}
where we have used $g(R(\dot{\sigma}, \dot{\sigma})\dot{\sigma}, \dot{\sigma}) = 0$. By using the assumption (\ref{assumption}) in the integral expression (\ref{index-sum}), we obtain
\begin{align}
\sum_{i = 2}^{n} I(\phi e_{i}, \phi e_{i}) & \leqslant \int_{0}^{\ell} \left( (n - 1)(\dot{\phi}^{2} - H \phi^{2}) + \phi^{2} \operatorname{Hess} f(\dot{\sigma}, \dot{\sigma}) \right) dt \nonumber \\
& = \int_{0}^{\ell} \left( (n - 1)(\dot{\phi}^{2} - H \phi^{2}) + \phi^{2} g(\nabla_{\dot{\sigma}} \nabla f, \dot{\sigma}) \right) dt \nonumber \\
& = \int_{0}^{\ell} \left( (n - 1)(\dot{\phi}^{2} - H \phi^{2}) + \phi^{2} \dot{\sigma}(g(\nabla f, \dot{\sigma})) \right) dt, \label{eq1}
\end{align}
where the last equality follows from the parallelism of the metric $g$ and $\nabla_{\dot{\sigma}} \dot{\sigma} = 0$. On the geodesic segment $\sigma(t)$, we have
\begin{align}
\phi^{2} \dot{\sigma} \left( g(\nabla f, \dot{\sigma}) \right) & = \phi^{2} \frac{d}{dt}(g(\nabla f, \dot{\sigma})) \nonumber \\
& = - 2 \phi \dot{\phi} g(\nabla f, \dot{\sigma}) + \frac{d}{dt}(\phi^{2} g(\nabla f, \dot{\sigma})) \nonumber \\
& = 2 f \frac{d}{dt} (\phi \dot{\phi}) - 2 \frac{d}{dt} (f \phi \dot{\phi}) + \frac{d}{dt}(\phi^{2} g(\nabla f, \dot{\sigma})), \label{eq2}
\end{align}
where in the last equality, we have used $g(\nabla f, \dot{\sigma}) = \frac{df}{dt}(\sigma(t))$. Hence, by integrating both sides of (\ref{eq2}), we have
\begin{align}
\int_{0}^{\ell} \phi^{2} \dot{\sigma}( g(\nabla f, \dot{\sigma})) dt & = \int_{0}^{\ell} 2f \frac{d}{dt}(\phi \dot{\phi}) dt - 2 \left[ f \phi \dot{\phi} \right]_{0}^{\ell} + \left[ \phi^{2} g(\nabla f, \dot{\sigma}) \right]_{0}^{\ell} \nonumber \\
& = 2 \int_{0}^{\ell} f \frac{d}{dt}(\phi \dot{\phi}) dt, \label{eq3}
\end{align}
where the last equality follows from $\phi(0) = \phi(\ell) = 0$. By (\ref{eq3}) and the assumption $| f | \leqslant k$ in Theorem \ref{Main-Theorem}, 
we obtain
\begin{equation}\label{eq4}
\int_{0}^{\ell} \phi^{2} \dot{\sigma}( g(\nabla f, \dot{\sigma})) dt \leqslant 2k \int_{0}^{\ell} \left| \frac{d}{dt}(\phi \dot{\phi}) \right| dt.
\end{equation}
From (\ref{eq1}) and (\ref{eq4}), we have
\begin{equation}\label{eq-main}
\sum_{i = 2}^{n} I(\phi e_{i}, \phi e_{i}) \leqslant \int_{0}^{\ell} (n - 1)(\dot{\phi}^{2} - H \phi^{2}) dt + 2k \int_{0}^{\ell} \left| \frac{d}{dt} (\phi \dot{\phi}) \right| dt.
\end{equation}
If the funtion $\phi$ is taken to be $\phi(t) = \sin (\frac{\pi t}{\ell})$, then we obtain $\dot{\phi}(t) = \frac{\pi}{\ell} \cos (\frac{\pi t}{\ell})$ and
\[
\phi \dot{\phi} = \frac{\pi}{\ell} \sin \left( \frac{\pi t}{\ell} \right) \cos \left( \frac{\pi t}{\ell} \right) = \frac{\pi}{2 \ell} \sin \left( \frac{2 \pi t}{\ell} \right).
\]
Then, (\ref{eq-main}) becomes
\[
\begin{aligned}
\sum_{i = 2}^{n} I(\phi e_{i}, \phi e_{i}) & \leqslant (n - 1) \int_{0}^{\ell} \left( \frac{\pi^{2}}{\ell^{2}} \cos^{2} \left( \frac{\pi t}{\ell} \right) - H \sin^{2} \left( \frac{\pi t}{\ell} \right) \right) dt \\
& \quad + 2k \left( \frac{\pi}{\ell} \right)^{2} \int_{0}^{\ell} \left| \cos \frac{2 \pi t}{\ell} \right| dt, 
\end{aligned}
\]
and consequently, we have
\[
\sum_{i = 2}^{n} I(\phi e_{i}, \phi e_{i}) \leqslant - \frac{1}{2 \ell} \left( (n - 1) H \ell^{2} - (n - 1) \pi^{2} - 8k \pi \right).
\]
Since $\sigma$ is a minimizing geodesic, we must obtain
\[
(n - 1) H \ell^{2} - (n - 1) \pi^{2} - 8k \pi \leqslant 0.
\]
From this inequality, we have
\[
\ell \leqslant \frac{\pi}{\sqrt{H}} \sqrt{1 + \frac{8k}{(n - 1) \pi}}.
\]
This proves Theorem \ref{Main-Theorem}.
\end{proof}

\begin{rem}\rm
By using the Cauchy-Schwarz inequality, Limoncu \cite{L} estimated (\ref{eq3}) from above by
\[
\int_{0}^{\ell} \phi^{2} \dot{\sigma}( g(\nabla f, \dot{\sigma})) dt = 2 \int_{0}^{\ell} f \frac{d}{dt}(\phi \dot{\phi}) dt \leqslant 2 \sqrt{\int_{0}^{\ell} f^{2} dt} \sqrt{\int_{0}^{\ell} \left( \frac{d}{dt} (\phi \dot{\phi}) \right)^{2} dt}, 
\]
while we estimated (\ref{eq3}) from above by its absolute value in (\ref{eq4}) and obtained the better estimate (\ref{diam-result}) than (\ref{diam-2}).
\end{rem}

\section{Applications to Theorem {\rm \ref{Main-Theorem}}}

In this section, by using Theorem \ref{Main-Theorem}, we shall give proofs of Corollary \ref{Cor-1} and \ref{Cor-2}. Throughout this section, we assume that $(M, g)$ is a four-dimensional compact connected normalized shrinking Ricci soliton satisfying (\ref{GRS}) and (\ref{normalize}).

\subsection{A proof of Corollary \ref{Cor-1}}

The following lemma is useful to prove Corollary \ref{Cor-1}:

\begin{lem}\label{Lem}
The potential function $f$ on the soliton $(M, g)$ satisfies
\[
0 \leqslant 2 \lambda f \leqslant R_{\mathrm{max}}, 
\]
where $R_{\mathrm{max}}$ denotes the maximum value of the scalar curvature $R$ on the soliton.
\end{lem}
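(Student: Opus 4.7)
The plan is to read off both inequalities essentially from the normalization identity (\ref{normalize}), $R + |\nabla f|^{2} = 2\lambda f$, combined with one nontrivial input: the non-negativity of the scalar curvature on a compact shrinking Ricci soliton.

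For the upper bound, I would evaluate (\ref{normalize}) at a maximum point $p_{0} \in M$ of the potential function $f$; such a point exists by the compactness of $M$. At $p_{0}$ one has $\nabla f(p_{0}) = 0$, so (\ref{normalize}) gives $2\lambda f(p_{0}) = R(p_{0}) \leqslant R_{\mathrm{max}}$. Since $\lambda > 0$ (the soliton is shrinking) and $f(x) \leqslant f(p_{0})$ for every $x \in M$, multiplying by $2\lambda$ preserves the inequality and yields $2\lambda f(x) \leqslant 2\lambda f(p_{0}) \leqslant R_{\mathrm{max}}$ pointwise on $M$.

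For the lower bound, I would rewrite (\ref{normalize}) as $2\lambda f = R + |\nabla f|^{2} \geqslant R$, so that it suffices to verify $R \geqslant 0$ on the soliton. This is the one step that is not completely formal: I would invoke the standard maximum-principle argument for the drift Laplacian. Differentiating (\ref{GRS}) and tracing gives the well-known identity $\Delta_{f} R = 2\lambda R - 2|\mathrm{Ric}|^{2}$, where $\Delta_{f} = \Delta - \langle \nabla f, \nabla \cdot\rangle$. At a point $q \in M$ where $R$ attains its minimum one has $\Delta_{f} R(q) \geqslant 0$, whence $2\lambda R(q) \geqslant 2|\mathrm{Ric}(q)|^{2} \geqslant 0$; since $\lambda > 0$ this forces $R(q) \geqslant 0$, hence $R \geqslant 0$ everywhere on $M$. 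Combining, $2\lambda f \geqslant R \geqslant 0$.

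The main (and really only) obstacle is the non-negativity of $R$, which depends on the shrinking hypothesis and on having a compact manifold, so that the minimum of $R$ is attained and the drift maximum principle applies; otherwise the argument is a direct application of (\ref{normalize}) at a critical point of $f$. Once $R \geqslant 0$ is in hand the chain of inequalities $0 \leqslant R \leqslant 2\lambda f \leqslant R_{\mathrm{max}}$ proves the lemma.
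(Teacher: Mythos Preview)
Your proof is correct and essentially the same as the paper's: both use the normalization identity at a maximum point of $f$ for the upper bound and the non-negativity of $R$ on a shrinking soliton for the lower bound. The only cosmetic differences are that the paper cites Chen's theorem for $R \geqslant 0$ rather than giving your self-contained drift-Laplacian maximum-principle argument, and it additionally observes that $f$ and $R$ attain their maxima at the same point (so in fact $2\lambda f_{\max} = R_{\max}$, not merely $\leqslant$), though your weaker observation $R(p_{0}) \leqslant R_{\max}$ is all that is needed.
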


\begin{proof}
Thanks to Chen \cite{Chen}, the scalar curvature of any complete shrinking Ricci soliton is non-negative. Hence, by (\ref{normalize}), we have $2 \lambda f \geqslant 0$. On the other hand, by compactness of the manifold $M$, there exists some global maximum point $p \in M$ of the potential function. Then, it follows from (\ref{normalize}) that for any point $x \in M$, 
\[
2 \lambda f(p) = R(p) \geqslant 2 \lambda f(x) = R(x) + | \nabla f |^{2} (x), 
\]
and hence, $R(p) \geqslant R(x)$. Therefore, the scalar curvature also attains its maximum at $p$, and we obtain the result.
\end{proof}

Corollary \ref{Cor-1} follows immediately from Theorem \ref{Main-Theorem} and Lemma \ref{Lem}.

\subsection{A proof of Corollary \ref{Cor-2}}

We use the following theorem to prove Corollary \ref{Cor-2}:

\begin{thm}[Ma \cite{Ma}]\label{Thm-Ma}
Let $(M, g)$ be a four-dimensional compact shrinking Ricci soliton satisfying {\rm (\ref{GRS})}. If the scalar curvature $R$ satisfies
\[
\int_{M} R^{2} \leqslant 24 \lambda^{2} \mathrm{vol}(M, g), 
\]
then the soliton $(M, g)$ satisfies the Hitchin-Thorpe inequality $2 \chi(M) \geqslant 3 | \tau(M) |$.
\end{thm}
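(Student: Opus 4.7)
The plan is to combine the Chern-Gauss-Bonnet and Hirzebruch signature formulas in dimension four with two global integral identities derived from the soliton equations, and then use Ma's hypothesis to close the estimate.

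First, in dimension four I would recall
\[
2\chi(M)\pm 3\tau(M)=\frac{1}{4\pi^{2}}\int_{M}\left(2|W^{\pm}|^{2}-\tfrac{1}{2}|\mathring{\operatorname{Ric}}|^{2}+\tfrac{R^{2}}{24}\right)dV,
\]
where $W^{\pm}$ denote the self-dual and anti-self-dual parts of the Weyl tensor and $\mathring{\operatorname{Ric}}:=\operatorname{Ric}_{g}-\tfrac{R}{4}g$ is the traceless Ricci tensor. Since $|W^{\pm}|^{2}\geqslant 0$, the Hitchin-Thorpe inequality $2\chi(M)\geqslant 3|\tau(M)|$ reduces to the global inequality
\[
\int_{M}R^{2}\,dV\geqslant 12\int_{M}|\mathring{\operatorname{Ric}}|^{2}\,dV.
\]

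Next I would compute $\int_{M}|\mathring{\operatorname{Ric}}|^{2}\,dV$ in closed form using the soliton structure. Tracing (\ref{GRS}) gives $R+\Delta f=4\lambda$, so $\int_{M}R\,dV=4\lambda\,\mathrm{vol}(M,g)$. Differentiating (\ref{normalize}) and substituting (\ref{GRS}) yields the pointwise identity $\nabla_{i}R=2R_{ij}\nabla^{j}f$; taking one more divergence and using the contracted second Bianchi identity $\nabla^{i}R_{ij}=\tfrac{1}{2}\nabla_{j}R$ produces
\[
\Delta R-\langle\nabla R,\nabla f\rangle=2\lambda R-2|\operatorname{Ric}_{g}|^{2}.
\]
Integrating against $dV$, using $\int_{M}\Delta R\,dV=0$ together with $\int_{M}\langle\nabla R,\nabla f\rangle\,dV=-\int_{M}R\Delta f\,dV$, and substituting $\Delta f=4\lambda-R$, I obtain
\[
\int_{M}|\operatorname{Ric}_{g}|^{2}\,dV=\tfrac{1}{2}\int_{M}R^{2}\,dV-4\lambda^{2}\,\mathrm{vol}(M,g).
\]
Combining with the dimension-four decomposition $|\mathring{\operatorname{Ric}}|^{2}=|\operatorname{Ric}_{g}|^{2}-\tfrac{R^{2}}{4}$, this gives
\[
\int_{M}|\mathring{\operatorname{Ric}}|^{2}\,dV=\tfrac{1}{4}\int_{M}R^{2}\,dV-4\lambda^{2}\,\mathrm{vol}(M,g).
\]

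Plugging this back into the reduced inequality from the first step, the required condition $\int_{M}R^{2}\,dV\geqslant 12\int_{M}|\mathring{\operatorname{Ric}}|^{2}\,dV$ rearranges exactly to Ma's hypothesis $\int_{M}R^{2}\,dV\leqslant 24\lambda^{2}\,\mathrm{vol}(M,g)$, which completes the proof. The main obstacle I expect is the careful derivation of the identity for $\int_{M}|\operatorname{Ric}_{g}|^{2}\,dV$: the natural pointwise relation $\Delta_{f}R=2\lambda R-2|\operatorname{Ric}_{g}|^{2}$ is most transparent in terms of the weighted Laplacian $\Delta_{f}:=\Delta-\langle\nabla f,\cdot\rangle$, so one has to integrate against the unweighted volume form and carefully convert $\int_{M}\langle\nabla R,\nabla f\rangle\,dV$ via integration by parts and $\Delta f=4\lambda-R$ into exactly the $\lambda^{2}\,\mathrm{vol}(M,g)$ correction needed to match the hypothesis.
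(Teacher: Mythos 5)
The paper states this theorem purely as a citation of Ma \cite{Ma} and supplies no proof of its own, so there is no internal argument to compare against; your proposal is a correct and complete proof, and it is in substance the standard (indeed Ma's) argument. The two key steps both check out: discarding the nonnegative Weyl terms in the four-dimensional Chern--Gauss--Bonnet and signature formulas reduces Hitchin--Thorpe to $\int_M R^2\,dV\geqslant 12\int_M|\mathring{\operatorname{Ric}}|^2\,dV$, and integrating $\Delta R-\langle\nabla R,\nabla f\rangle=2\lambda R-2|\operatorname{Ric}_g|^2$ against $dV$ (using $\int_M R\,dV=4\lambda\,\mathrm{vol}(M,g)$ and $\Delta f=4\lambda-R$; note the pointwise identity $\nabla_i R=2R_{ij}\nabla^j f$ holds for any gradient soliton, so the normalization (\ref{normalize}) is not actually needed here) gives exactly $\int_M|\operatorname{Ric}_g|^2\,dV=\tfrac{1}{2}\int_M R^2\,dV-4\lambda^2\,\mathrm{vol}(M,g)$, which converts that sufficient condition into the stated hypothesis $\int_M R^2\,dV\leqslant 24\lambda^2\,\mathrm{vol}(M,g)$.
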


\begin{proof}[Proof of Corollary {\rm \ref{Cor-2}}]
By taking the trace of (\ref{GRS}), we have
\begin{equation}\label{Cor-2-eq1}
R + \Delta f = 4 \lambda.
\end{equation}
Thanks to Theorem \ref{Main-Theorem}, the diameter of $(M, g)$ has the upper bound
\begin{equation}\label{diam-Cor-2-eq2}
\operatorname{diam}(M, g) \leqslant \frac{\pi}{\sqrt{\lambda}} \sqrt{3 + \frac{4 R_{\mathrm{max}}}{\pi \lambda}}.
\end{equation}
Suppose that the inequality (\ref{diam-Cor-2}) holds. Then, by (\ref{diam-Cor-2-eq2}) we obtain
\[
\frac{R_{\mathrm{max}}}{\lambda^{2}} \left( 4 \pi + \frac{\pi^{2}}{2} \right) \leqslant \operatorname{diam}^{2}(M, g) \leqslant \frac{\pi^{2}}{\lambda} \left( 3 + \frac{4 R_{\mathrm{max}}}{\pi \lambda} \right), 
\]
from where we have $R_{\mathrm{max}} \leqslant 6 \lambda$. Since the scalar curvature of any complete shrinking Ricci soliton is non-negative, it follows from (\ref{Cor-2-eq1}) that
\[
\int_{M} R^{2} \leqslant R_{\mathrm{max}} \int_{M} R = 24 \lambda^{2} \mathrm{vol}(M, g), 
\]
and the result follows from Theorem \ref{Thm-Ma}.
\end{proof}

By using the same way as in the previous proof, we may easily show the following:

\begin{cor}\label{Cor-3}
Let $(M, g)$ be a four-dimensional compact connected shrinking Ricci soliton satisfying {\rm (\ref{GRS})}. Suppose that the soliton is normalized in sense of {\rm (\ref{normalize})}. If
\begin{equation}\label{diam-Cor-3}
\frac{R_{\mathrm{max}}}{6 \lambda} \cdot \frac{\pi}{\sqrt{\lambda}} \sqrt{3 + \frac{4 R_{\mathrm{max}}}{\pi \lambda}} \leqslant \operatorname{diam}(M, g), 
\end{equation}
then the soliton satisfies the Hitchin-Thorpe inequality $2 \chi(M) \geqslant 3 | \tau(M) |$.
\end{cor}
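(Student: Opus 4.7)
The plan is to imitate the structure of the proof of Corollary \ref{Cor-2}: the hypothesis (\ref{diam-Cor-3}) is tailored so that, when combined with the upper diameter bound coming from Theorem \ref{Main-Theorem} and Lemma \ref{Lem}, it immediately forces the pointwise scalar curvature estimate $R_{\mathrm{max}} \leqslant 6 \lambda$, after which Theorem \ref{Thm-Ma} can be invoked in exactly the same way as before.

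More concretely, I would first record that Theorem \ref{Main-Theorem} together with Lemma \ref{Lem}, applied in dimension $n = 4$, yields the upper bound (\ref{diam-Cor-2-eq2}), namely
\[
\operatorname{diam}(M, g) \leqslant \frac{\pi}{\sqrt{\lambda}} \sqrt{3 + \frac{4 R_{\mathrm{max}}}{\pi \lambda}}.
\]
Then I would chain this against the standing hypothesis (\ref{diam-Cor-3}): the two inequalities share the common strictly positive factor $\frac{\pi}{\sqrt{\lambda}} \sqrt{3 + \frac{4 R_{\mathrm{max}}}{\pi \lambda}}$, so dividing through yields $\frac{R_{\mathrm{max}}}{6 \lambda} \leqslant 1$ in a single line. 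This is the only place where the precise form of (\ref{diam-Cor-3}) is used, and it is evidently engineered so that this cancellation takes place; it plays the role that the algebraic estimate $R_{\mathrm{max}} \leqslant 6 \lambda$ plays in the proof of Corollary \ref{Cor-2}, but now extracted directly from the shape of (\ref{diam-Cor-3}) rather than via the quadratic manipulation used there.

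With $R_{\mathrm{max}} \leqslant 6 \lambda$ in hand, I would replay the final step of the proof of Corollary \ref{Cor-2} verbatim. Taking the trace of the gradient soliton equation (\ref{GRS}) gives $R + \Delta f = 4 \lambda$; integrating over the compact manifold $M$ (so that the Laplacian term vanishes by the divergence theorem) yields $\int_{M} R = 4 \lambda \operatorname{vol}(M, g)$. Combining the pointwise bound $R \leqslant R_{\mathrm{max}} \leqslant 6 \lambda$ with the non-negativity of the scalar curvature of a complete shrinking Ricci soliton (Chen \cite{Chen}) then produces
\[
\int_{M} R^{2} \leqslant R_{\mathrm{max}} \int_{M} R \leqslant 24 \lambda^{2} \operatorname{vol}(M, g),
\]
and the Hitchin-Thorpe inequality $2 \chi(M) \geqslant 3 | \tau(M) |$ follows from Theorem \ref{Thm-Ma}.

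I do not anticipate any serious obstacle: the argument is a direct variant of the proof of Corollary \ref{Cor-2}, and the only creative input is recognizing that the slightly different shape of (\ref{diam-Cor-3}) still leads, via a one-line cancellation against the Main Theorem diameter bound, to the same key inequality $R_{\mathrm{max}} \leqslant 6 \lambda$ needed to feed Theorem \ref{Thm-Ma}.
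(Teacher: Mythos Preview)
Your proposal is correct and follows exactly the approach the paper intends: the paper itself does not write out a separate proof of this corollary, remarking only that it is obtained ``by using the same way as in the previous proof,'' and your argument is precisely that---combine the hypothesis with the diameter bound (\ref{diam-Cor-2-eq2}) to cancel the common positive factor and obtain $R_{\mathrm{max}} \leqslant 6\lambda$, then feed this into Theorem \ref{Thm-Ma} via the same integral estimate.
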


\begin{rem}\rm
The inequality {\rm (\ref{diam-Cor-3})} may be a better condition than (\ref{diam-Cor-2}) when the maximum value $R_{\mathrm{max}}$ of the scalar curvature is sufficiently small.
\end{rem}

\end{document}